\newtheorem{theorem}{Theorem}[section]
\newtheorem{lemma}[theorem]{Lemma}
\newtheorem{conjecture}{Conjecture}[]
\newtheorem{case}{Case}
\newtheorem{claim}{Claim}
\title{\textbf{Every Elementary Graph is Chromatic Choosable}}
\author{\textbf{Nandana K Vasudevan, K Somasundaram, J Geetha}}
\affil{Department of Mathematics\\ Amrita School of Physical Sciences - Coimbatore\\ Amrita Vishwa Vidyapeetham, India}
\date{\textit{Email : kv\_nandana@cb.students.amrita.edu, s\_sundaram@cb.amrita.edu \\ j\_geetha@cb.amrita.edu}}
\begin{document}

\maketitle

\begin{abstract}
Elementary graphs are graphs whose edges can be colored using two colors in such a way that the edges in any induced $P_3$ get distinct colors. They constitute a subclass of the class of claw-free perfect graphs. In this paper, we show that for any elementary graph, its list chromatic number and chromatic number are equal. 
\end{abstract}

\textbf{Keywords :} List coloring, Claw-free perfect graphs, Elementary graphs \\
\textbf{Mathematics Subject Classification :} 05C15, 05C17

\section{Introduction}
\hfill \par List coloring of graphs is a more general version of vertex coloring. It was introduced independently by Vizing and by Erdos, Rubin and Taylor. The \textit{list assignment} of a graph $G$ is a function $L:V(G) \rightarrow \mathcal{P}(\mathbb{N})$ that assigns to each vertex $v \in V(G)$, a list of colors $L(v)$. The list assignment $L$ is called a \textit{$k$-list assignment} if $|L(v)|=k, \forall v \in V(G)$. If $L$ is a list assignment of $G$, \textit{$L$-coloring} is a coloring in which each vertex receives a color from its list. It is proper if no two adjacent vertices receive the same color. We say $G$ is \textit{k-list colorable} or \textit{$k$-choosable} if $G$ has a proper $L$-coloring for all $k$-list assignment. The smallest $k$ for which $G$ is $k$-choosable is called \textit{list chromatic number}, \textit{choice number} or \textit{choosability} of $G$, denoted by $\chi_l(G)$, $\chi_L(G)$ or ch(G). We use the notation $\chi_l(G)$ throughout in this paper.  For any graph $G$ with chromatic number $\chi(G)$, $\chi_l(G) \geq \chi(G)$. This is called the \textit{choice chromatic inequality} \cite{gravier1998graphs}. The difference between $\chi_l(G)$ and $\chi(G)$ can be arbitrarily large.  For example, $\chi(K_{n,n^{n}})=2$ whereas $\chi_l(K_{n,n^{n}})=n+1$.   If $\chi_l(G)=\chi(G)$, then $G$ is called a \textit{chromatic choosable graph} \cite{ohba2002chromatic}. There is a natural question arises on when a graph $G$ is chromatic choosable. Many conjectures were proposed on this for different classes of graphs.  

\hfill \par A graph is said to be \textit{H-free} if it does not have the graph $H$ as an induced subgraph. If $H$ is the claw $K_{1,3}$, the graph is called \textit{claw-free}.  Gravier and Maffray \cite{gravier1998graphs} posed a question on the chromatic choosability of claw-free graphs, which they later stated as the List Coloring Conjecture (LCC) for claw-free graphs in \cite{gravier1997choice}.  

\begin{conjecture}[\textbf{LCC for claw-free graphs} \cite{gravier1997choice}]
Every claw-free $G$ satisfies $\chi_l(G)=\chi(G).$
\end{conjecture}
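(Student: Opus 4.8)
The plan is to attack the conjecture by combining the two structure theorems available for claw-free graphs with the orientation/kernel method for list colouring. First I would reduce to the connected case, since both $\chi_l$ and $\chi$ are the maximum over connected components, and then split according to whether $G$ is perfect. This split is forced: the cited example $K_{n,n^n}$ is perfect yet has $\chi_l$ arbitrarily larger than $\chi$, so perfection alone is useless and the claw-free hypothesis — which forces $\alpha(N(v)) \le 2$ for every vertex $v$ (and, in the quasi-line subcase, even forces $N(v)$ to be covered by two cliques) — must be exploited locally. The central tool throughout will be the Kernel Lemma of Bondy, Boppana and Siegel: if $G$ admits an orientation $D$ in which every induced subdigraph has a kernel and $|L(v)| \ge d^+_D(v)+1$ for all $v$, then $G$ is $L$-colourable. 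The whole proof then reduces to producing, for each structural piece, a kernel-perfect orientation whose maximum out-degree is at most $\chi(G)-1$, which with the choice chromatic inequality yields $\chi_l(G)=\chi(G)$.

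For the perfect case I would build on the result of this paper. By the decomposition theorem of Chv\'{a}tal--Sbihi and Maffray--Reed, every claw-free perfect graph is obtained from \emph{elementary} graphs and \emph{peculiar} graphs by repeatedly gluing along clique cutsets and amalgams. Elementary graphs are chromatic choosable by the theorem of this paper, and the peculiar graphs form an explicitly bounded family that I would dispatch directly. The substantive step is to show that chromatic choosability survives the two gluing operations. Clique cutsets behave well — one colours the two sides and permutes colours on the shared clique — but amalgams are delicate. Here I would orient each piece by a clique-acyclic orientation of small maximum out-degree derived from an optimal colouring and invoke the Boros--Gurvich theorem (every clique-acyclic superorientation of a perfect graph has a kernel) to certify kernel-perfectness; claw-freeness is what keeps the out-degrees bounded by $\omega(G)-1=\chi(G)-1$.

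For the imperfect case I would invoke the Chudnovsky--Seymour structure theorem, which presents every claw-free graph as a composition, via strips and generalized $2$-joins, of graphs drawn from a short list of basic classes: line graphs, induced subgraphs of the icosahedron, proper circular-arc/fuzzy circular-interval graphs, antiprismatic graphs, and a few sporadic types. The program is to verify the conjecture on each basic class by exhibiting a kernel-perfect orientation of the required out-degree, and then to show that the strip and $2$-join compositions do not increase $\chi_l-\chi$. The bounded basic classes and the circular-interval class should be tractable by direct kernel constructions analogous to Galvin's argument for interval and bipartite line graphs.

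The main obstacle — and the reason the conjecture remains open — is the line-graph basic class. Since colouring a line graph is exactly edge-colouring its preimage, the conjecture restricted to line graphs is precisely the List Edge Colouring Conjecture $\chi_l'(H)=\chi'(H)$, which is unproven in general (known only for bipartite multigraphs, by Galvin, and asymptotically, by Kahn); no proof of the full claw-free conjecture can evade this barrier. A secondary difficulty is that $\chi_l$ does not localize: the $K_{n,n^n}$ phenomenon shows that gluing bounded-$\chi_l$ pieces can blow up choosability, so the amalgam and $2$-join steps demand genuinely new arguments rather than a routine induction. Consequently my realistic target would be the claw-free perfect case unconditionally, and the imperfect case conditional on the List Edge Colouring Conjecture, with the elementary theorem of this paper serving as the seed of the induction.
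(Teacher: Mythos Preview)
The statement you are addressing is Conjecture~1, which the paper does \emph{not} prove; it is stated as an open problem. The paper's contribution is only that elementary graphs are chromatic choosable, and its concluding paragraph explicitly says that even the claw-free \emph{perfect} case remains open because the clique-cutset step is unresolved. So there is no ``paper's own proof'' to compare against; you are outlining a program for an open conjecture, and you yourself concede that the line-graph basic class (equivalently the List Edge Colouring Conjecture) blocks any unconditional proof.

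Beyond that acknowledged barrier, your program has a concrete error. You write that ``clique cutsets behave well --- one colours the two sides and permutes colours on the shared clique.'' That argument is valid for ordinary colouring but fails for list colouring: a colour permutation does not respect arbitrary lists, and chromatic choosability of the pieces $G_1$, $G_2$ only guarantees \emph{some} $L$-colouring of each, not one that matches a prescribed colouring on the shared clique $Q$. This is precisely the difficulty the paper flags as open in its last paragraph. Your Boros--Gurvich fallback has a parallel gap: orienting edges from higher to lower colour in an optimal $\omega$-colouring is indeed clique-acyclic and hence kernel-perfect on a perfect graph, but claw-freeness together with perfection only gives $|N(v)|\le 2(\omega-1)$ (two cliques of size at most $\omega-1$), so the maximum out-degree can be $2(\omega-1)$ rather than $\omega-1$, and the Kernel Lemma then yields only $\chi_l(G)\le 2\omega-1$. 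Thus your outline does not settle even the claw-free perfect case.
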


Esperet et al. \cite{esperet2014list} proved that for every claw-free graph $G$, if $\omega(G) \leq 3$ then $G$ is 4-choosable and if $\omega(G) \leq 4$ then $G$ is 7-choosable. Cranston and Rabern \cite{cranston2017list} proved several results on list coloring of claw-free graphs. In particular, they proved that every claw-free graph $G$ with $\Delta(G) \geq 69$ and $\omega(G)<\Delta(G)$
satisfies $\chi_l(G) \leq \Delta(G)-1$. LCC for claw-free graphs is still open. In 2016, Gravier, Maffray and  Pastor \cite{gravier2016choosability} narrow down the LCC for claw-free perfect graphs. 

\hfill \par A graph $G$ is said to be \textit{perfect} if $\chi(G)= \omega(G)$, where $\omega(G)$ is the clique number of $G$. The class of claw-free perfect graphs is an important subgraph class of claw-free graphs. Chvatal and Sbihi \cite{chvatal1988recognizing} have given a decomposition for the class of claw-free perfect graphs. Before we go to their decomposition, we state some definitions. A \textit{clique cutset} is a vertex cut that induces a clique. The construction of \textit{peculiar graph} is as follows. Let $K_n$ be a complete graph whose set of vertices is split into pairwise disjoint nonempty sets $A_1,B_1,A_2,B_2,A_3,B_3$. For each $i= 1, 2, 3,$ remove at least one edge with one endpoint in $A_i$ and the other endpoint in $B_{i+1}$ (subscript is modulo 3). Finally, add pairwise disjoint nonempty cliques $Q_1,Q_2,Q_3$, and, for each $i= 1, 2, 3$, make each vertex in $Q_i$ adjacent to all the vertices in $K_n-(A_i \cup B_i)$. An example of a Peculiar graph with all $A_i,B_i,Q_i$ as $K_1$ is shown in Fig. \ref{Peculiar graph}. A graph $G$ is said to be an \textit{elementary graph} if its edges can be colored with two colors such that every induced path of length 3 has its edges colored distinctly. Suppose the colors given to the edges are pink and green. Let the pink graph of a given elementary graph $G$ be the spanning subgraph of $G$ with edge set as those edges colored in pink and the green graph of $G$ be the spanning subgraph of $G$ with edge set as those edges colored in green. In Fig. \ref{Elementary graph with pink and green}, we have shown an elementary (Fig. \ref{Elementary graph}) graph and its corresponding pink graph (Fig. \ref{Elementary graph 1}) and green graph (Fig. \ref{Elementary graph 2}). The decomposition of the claw-free perfect graphs is as follows.

\begin{figure}[h]
    \centering
    \includegraphics[height=4cm]
    {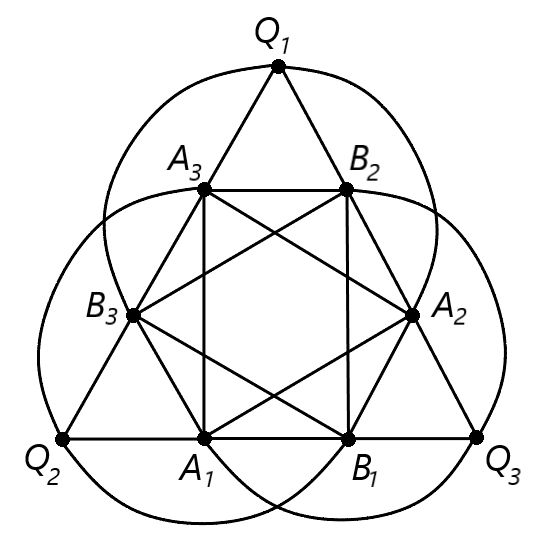}
    \caption{Peculiar graph with all $A_i,B_i,Q_i$ as $K_1$}
    \label{Peculiar graph}
\end{figure}

\begin{figure}[h]
    \begin{center}
    \begin{subfigure}{0.3 \textwidth}
        \centering
        \includegraphics[height=4cm]{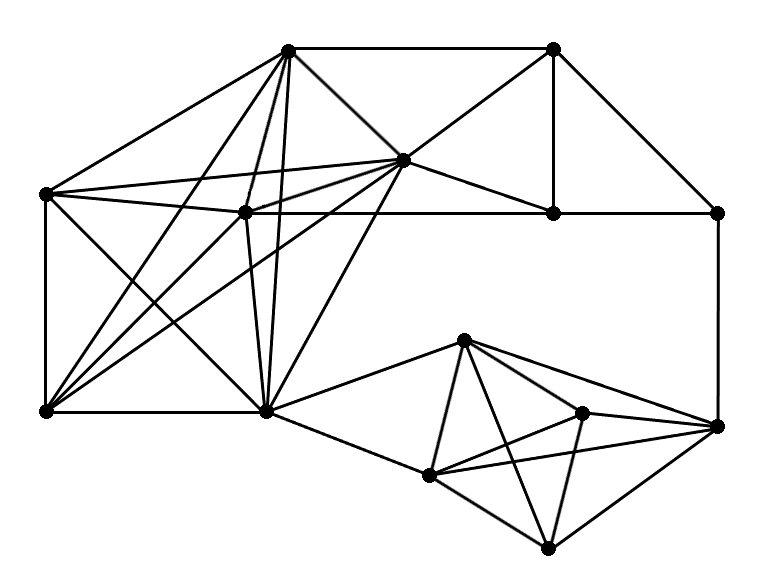}
        \caption{An elementary graph}
        \label{Elementary graph}
    \end{subfigure}
    \newline
    \begin{subfigure}{0.3 \textwidth}
        \centering
        \includegraphics[height=4cm]{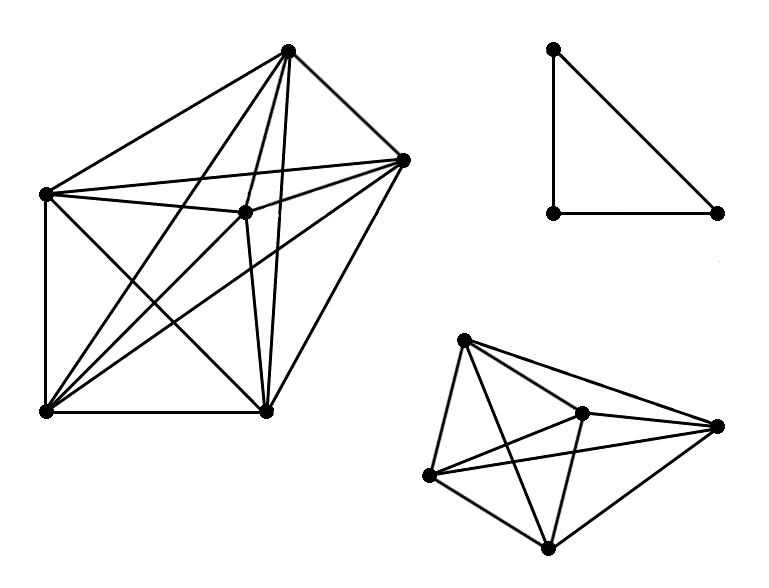}
        \caption{The pink graph}
        \label{Elementary graph 1}
    \end{subfigure}
    \hspace{3cm}
    \begin{subfigure}{0.3 \textwidth}
        \centering
        \includegraphics[height=4cm]{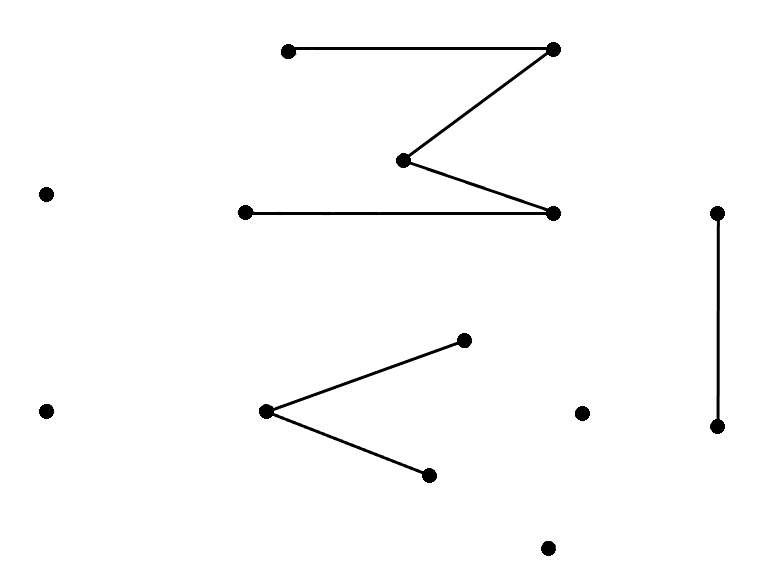}
        \caption{The green graph}
        \label{Elementary graph 2}
    \end{subfigure}  
    \caption{An elementary graph, its pink graph and its green graph}
    \label{Elementary graph with pink and green}
    \end{center}
\end{figure}

\begin{theorem}[\cite{chvatal1988recognizing}]
Every claw-free perfect graph with no clique cutset is either elementary or peculiar.
\end{theorem}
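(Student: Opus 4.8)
The plan is to follow the structural strategy of Chvatal and Sbihi: first read off what claw-freeness together with perfection forces on each neighborhood, then reformulate ``elementary'' as a two-coloring problem on the edges, and finally show that the only obstruction compatible with the remaining hypotheses is the peculiar configuration. Let $G$ be claw-free and perfect with no clique cutset. The first step is the \emph{two-clique neighborhood property}: for every vertex $v$, the graph $G[N(v)]$ is the union of at most two cliques of $G$. Since $G$ is claw-free, $G[N(v)]$ has no stable set of size $3$ (such a set together with $v$ would induce a $K_{1,3}$), so $\overline{G[N(v)]}$ is triangle-free; if $\overline{G[N(v)]}$ were not bipartite it would contain a chordless odd cycle of length at least $5$, and then $G[N(v)]$ --- hence $G$ --- would contain an induced odd antihole, which is not perfect, contradicting the perfection of $G$. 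So $\overline{G[N(v)]}$ is bipartite, i.e.\ $V(N(v))$ splits into two cliques. This is the engine of everything that follows: locally, $G$ looks like the line graph of a graph in which $v$ plays the role of an edge, up to substituting cliques for vertices.

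The second step turns ``elementary'' into a statement about odd cycles. Build the auxiliary graph $H$ with vertex set $E(G)$ in which two edges of $G$ are adjacent exactly when they share an endpoint and their other two endpoints are non-adjacent, i.e.\ when together they form an induced $P_3$. By the definition of ``elementary'' --- a pink/green coloring of $E(G)$ with no monochromatic induced $P_3$ --- the graph $G$ is elementary if and only if $H$ is $2$-colorable, that is, if and only if $H$ has no odd cycle. Hence it suffices to prove the following: if $G$ is claw-free and perfect, has no clique cutset, \emph{and} $H$ contains an odd cycle, then $G$ is peculiar.

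The third step takes a shortest odd cycle $C$ of $H$ and blows it up. Such a $C$ corresponds to a cyclic sequence $e_0,e_1,\dots,e_{2k}$ of edges of $G$ in which consecutive edges form an induced $P_3$. Using the two-clique property at each shared vertex, the fact that a perfect graph contains no $C_5$ and no odd hole, and the minimality of $C$, I would first argue that $C$ must be short --- realistically a triangle or a $5$-cycle of $H$ --- and then that, after contracting the cliques supplied by the two-clique property, the vertices met by $C$ carry exactly the skeleton of a peculiar graph: three ``linkage'' cliques $Q_1,Q_2,Q_3$ together with a partition of the remaining vertices into $A_1,B_1,A_2,B_2,A_3,B_3$ realizing the three prescribed deleted edges (each joining some $A_i$ to some $B_{i+1}$, subscripts modulo $3$). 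The hypothesis ``no clique cutset'' is then used to finish: it excludes the degenerate collapses of this skeleton (those turn out to have a clique cutset) and it forces every remaining vertex of $G$ to attach to the skeleton only in a way already permitted by the peculiar construction, so that $G$ equals the whole peculiar graph.

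The main obstacle is the third step. Pinning the shortest odd cycle of $H$ down to a triangle or a $5$-cycle requires a patient case analysis that alternately invokes claw-freeness (to force adjacencies among the at most ten vertices involved) and perfection (to forbid the short odd holes that would otherwise appear), and keeping the three independent ``missing-edge'' gadgets in play --- each linking some $A_i$ to some $B_{i+1}$ --- without letting a clique cutset slip in is the delicate bookkeeping. Everything else, namely the local two-clique lemma and the reduction through $H$, is routine; the heart of the theorem is this final classification, which is the content of Chvatal and Sbihi's original proof.
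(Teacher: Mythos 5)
You should first note that the paper offers no proof of this statement: it is imported verbatim from Chv\'atal and Sbihi \cite{chvatal1988recognizing} as a known structural theorem, so there is no in-paper argument to compare against. Judged on its own, your proposal is a faithful outline of the right strategy but not a proof. Your first two steps are sound and complete: the neighborhood of every vertex splits into two cliques (claw-freeness makes $\overline{G[N(v)]}$ triangle-free, and perfection excludes the odd hole $\overline{C_5}=C_5$ and the odd antiholes $\overline{C_{2k+1}}$, $k\ge 3$, that a non-bipartite triangle-free complement would force), and the reformulation of ``elementary'' as bipartiteness of the Gallai graph $H$ is exactly the paper's Theorem~2.2. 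Up to that point everything is routine and correct.

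The genuine gap is your entire third step, and it is not a small one --- it is the theorem. You assert that a shortest odd cycle of $H$ is ``realistically a triangle or a $5$-cycle'' with no argument; that bound does not follow from anything you have established, and bounding the length of a shortest odd cycle in the Gallai graph is itself a nontrivial piece of the Chv\'atal--Sbihi analysis. More seriously, even granting a short odd cycle, you only gesture at extracting a ``skeleton'' of a peculiar graph from the vertices it meets; you do not show how the three deleted $A_iB_{i+1}$ edges arise from the three induced $P_3$'s of the cycle, nor --- crucially --- how the absence of a clique cutset forces \emph{every} vertex of $G$ to land in one of the nine sets $A_1,B_1,\dots,Q_3$ with exactly the prescribed adjacencies. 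Without that global placement argument the conclusion ``$G$ is peculiar'' is unsupported: a priori $G$ could contain a peculiar-looking induced subgraph together with other vertices attached in ways the peculiar construction does not allow. You acknowledge that this classification ``is the content of Chv\'atal and Sbihi's original proof,'' which is an honest admission that the proposal is a roadmap rather than a proof; to close the gap you would need to reproduce their case analysis (or an equivalent one) in full.
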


In 1997, Gravier and Maffray \cite{gravier1997choice} proved that any elementary graph $G$ with clique number at most 3 is chromatic choosable. The chromatic choosability of peculiar graphs had been given by the same authors in \cite{gravier1998graphs} during 1998. Later in 2004, they \cite{gravier2004choice} established that the chromatic choosability for any claw-free perfect graph with clique number less than or equal to 3. These works led to the question on the chromatic choosability of any claw-free perfect graph. In 2016,  Gravier et al. \cite{gravier2016choosability} stated it as a conjecture.

\begin{conjecture}[\textbf{LCC for claw-free perfect graphs} \cite{gravier2016choosability}]
Every claw-free perfect graph $G$ satisfies $\chi_l(G)=\chi(G).$
\end{conjecture}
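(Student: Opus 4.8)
I aim to prove the conjecture in full. The first move is a reduction to elementary graphs. If a claw-free perfect graph $G$ has a clique cutset $K$, then $G=G_1\cup G_2$ with $V(G_1)\cap V(G_2)=V(K)$, where $G_1,G_2$ are induced subgraphs of $G$, hence again claw-free and perfect and strictly smaller; a clique-cutset argument (of the kind used by Gravier and Maffray for claw-free perfect graphs) reduces the chromatic choosability of $G$ to that of $G_1$ and $G_2$, so by induction we may assume $G$ has no clique cutset. By the Chvatal--Sbihi theorem above, $G$ is then elementary or peculiar, and peculiar graphs are already known to be chromatic choosable. Hence everything reduces to showing that every elementary graph $G$ is chromatic choosable. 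Since elementary graphs are perfect we have $\chi(G)=\omega(G)$, and the choice-chromatic inequality gives $\chi_l(G)\ge\omega(G)$; so the real task is the upper bound: from any $\omega(G)$-list assignment $L$, produce a proper $L$-coloring.

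For elementary graphs I would run an induction steered by the structural description of elementary graphs due to Maffray and Reed: every elementary graph is an \emph{augmentation} of the line graph of a bipartite multigraph, that is, it arises from such a line graph by a bounded sequence of blow-ups of certain homogeneous cliques, namely the cliques $K_v$ consisting of the edges at a vertex $v$ of the underlying bipartite multigraph. The base case is Galvin's theorem: for a bipartite multigraph $B$ one has $\chi'_l(B)=\chi'(B)$, which says precisely that the line graph $L(B)$ is chromatic choosable, and $L(B)$ is perfect with $\omega(L(B))=\Delta(B)=\chi'(B)=\chi(L(B))$. What I really want from the base case is more than the statement: Galvin's proof supplies an orientation of $L(B)$, read off from a proper $\Delta(B)$-edge-coloring of $B$, in which every vertex has out-degree at most $\omega(L(B))-1$ and every induced subdigraph has a kernel, so that the kernel method of Bondy, Boppana and Siegel yields $\omega$-choosability. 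It is this orientation that I want to carry through the induction.

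The core of the argument is to check that a single augmentation step preserves the bound. Suppose $G'$ is obtained from an elementary graph $H$, already equipped with a kernel-perfect orientation $D_H$ of out-degree at most $\omega(H)-1$, by blowing up one homogeneous clique $K_v$ into a clique $\widetilde K_v$. I would orient $\widetilde K_v$ internally by an arbitrary linear order, and orient the edges between $\widetilde K_v$ and $V(H)\setminus K_v$ so that each copy in $\widetilde K_v$ receives the out-neighbors of $v$ in $D_H$, adjusted by its rank in the order; homogeneity of $K_v$ --- its external neighborhood is covered by at most two cliques of $H$ --- together with the controlled growth of $\omega$ under the blow-up keeps every out-degree strictly below $\omega(G')$. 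One must then verify that the resulting orientation $D_{G'}$ is still kernel-perfect: given an induced subdigraph, one intersects it with $\widetilde K_v$, chooses the right (``stable-marriage'') representative there as in Galvin's analysis, passes to the corresponding subdigraph of $D_H$, and lifts a kernel back. Applying the kernel method to the fully augmented graph $G$ then gives $\chi_l(G)\le\omega(G)$, hence equality. A less orientation-heavy alternative for this step is to contract one augmented clique to a single vertex, obtain a smaller elementary graph, $L$-color it by the inductive hypothesis, and then color the clique itself by picking a system of distinct representatives among the surviving lists, where homogeneity is exactly what makes Hall's condition hold.

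The step I expect to be the real obstacle is this last one: re-establishing kernel-perfectness (equivalently, Hall's condition) after the blow-up. A careless blow-up destroys it --- a blow-up of a chromatic-choosable graph need not be chromatic choosable, already $K_2$ blows up to $K_{3,3}$ --- so the entire combinatorial content is the claim that the Maffray--Reed augmentations are exactly the ``benign'' blow-ups, those preserving kernel-perfectness of the Galvin orientation. As consistency checks, the construction should reduce to Galvin's theorem when no augmentation is applied, reproduce the known chromatic choosability of elementary graphs with $\omega\le 3$ (Gravier and Maffray), and behave correctly on complete graphs.
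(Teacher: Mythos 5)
First, note that the statement you are proving is stated in the paper as a \emph{conjecture}, not a theorem: the paper does not prove it, and in fact its concluding paragraph explicitly says that the case of claw-free perfect graphs with a clique cutset remains open. Your proposal founders on exactly that point. The opening reduction --- ``a clique-cutset argument \dots reduces the chromatic choosability of $G$ to that of $G_1$ and $G_2$'' --- is not a known argument and is the genuine gap. For ordinary coloring the clique-cutset reduction works because one may permute the colors of an optimal coloring of $G_2$ to match the coloring of $G_1$ on $K$; for \emph{list} coloring the lists are fixed and colors cannot be permuted, so $\omega$-choosability of $G_1$ and $G_2$ does not let you glue: coloring $G_2$ so that it agrees with a prescribed coloring of the clique $K$ is a precoloring-extension problem that $\omega$-choosability does not solve. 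The partial results you cite (Gravier--Maffray for $\omega\le 3$, Gravier--Maffray--Pastor for $\omega\le 4$) handle clique cutsets by ad hoc arguments tied to the small clique number, not by a general reduction. Without this step your proof establishes at most the elementary case, which is what the present paper actually does.

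On the elementary case itself, two further issues. First, your structural picture is not the Maffray--Reed one: an augmentation does not blow up a homogeneous clique $K_v$ of edges at a vertex of $B$; it deletes the two endpoints $x,y$ of a \emph{flat edge} (an edge in no triangle) and replaces them by a cobipartite graph $H=(X,Y,E_{XY})$, attaching $X$ to $N(x)\setminus\{y\}$ and $Y$ to $N(y)\setminus\{x\}$. This matters because the clique number can grow in a way controlled by the non-edges between $X$ and $Y$ (Lemma \ref{lemma3} in the paper), which is where all the work lies. Second, you yourself identify re-establishing kernel-perfectness after the augmentation as ``the real obstacle'' and do not resolve it; asserting that augmentations are exactly the benign blow-ups is the theorem, not a proof of it. The paper's route avoids orientations entirely: it inducts on the number $h$ of augmentations with Galvin as the base case, colors $G$ minus the last augment $H$ by induction, and colors $H$ from the residual lists via an explicit cobipartite-coloring lemma, with a delicate second induction (duplicating edges in $B$ to build an auxiliary elementary graph $G^*$ with a tailored list assignment $L^*$) to rule out the bad case where the residual lists on a forced non-adjacent pair $x_1,y_1$ are too small. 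Your ``contract the augmented clique and use a system of distinct representatives'' alternative is closer in spirit to this, but Hall's condition for the surviving lists is precisely what fails in the bad case and needs the $G^*$ argument, so it cannot be waved through.
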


The same authors proved their conjecture for claw-free graphs with clique number 4.
\begin{theorem}[\cite{gravier2016choosability}]
Let $G$ be a claw-free perfect graph with $\omega(G) \leq 4$. Then, $\chi_l(G)=\chi(G).$
\end{theorem}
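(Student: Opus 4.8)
Since $G$ is perfect, $\chi(G)=\omega(G)\le 4$, and by the choice chromatic inequality it suffices to produce a proper $L$-coloring for every $\omega(G)$-list assignment $L$. The first step is to dispose of clique cutsets: if $G$ has a clique cutset, write $G=G_1\cup G_2$ with $G_1\cap G_2$ a clique, and use the standard fact that clique cutsets do not affect chromatic choosability, $\chi_l(G)=\max\{\chi_l(G_1),\chi_l(G_2)\}$ (one colors the atoms of the clique-cutset decomposition one at a time along a clique-tree ordering, reconciling the colorings on the shared cliques). Since claw-freeness, perfection and the bound $\omega\le 4$ are inherited by induced subgraphs, we may therefore assume $G$ has no clique cutset. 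By the Chv\'atal--Sbihi theorem \cite{chvatal1988recognizing}, $G$ is then elementary or peculiar; peculiar graphs are chromatic choosable by Gravier--Maffray \cite{gravier1998graphs}, and every elementary graph with $\omega\le 3$ is chromatic choosable by Gravier--Maffray \cite{gravier1997choice} (also a consequence of \cite{gravier2004choice}). So the only remaining case is $G$ elementary with $\omega(G)=4$.

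For this case I would exploit the structure of elementary graphs. Fix the $2$-edge-coloring of $G$ into a pink graph and a green graph. If $a$ and $b$ are distinct pink-neighbors of a vertex $v$ with $ab\notin E(G)$, then $a\,v\,b$ is an induced $P_3$ with both edges pink, which the coloring forbids; hence the pink-neighborhood of $v$ is a clique, and likewise the green-neighborhood, so the closed neighborhood $N[v]$ is covered by a ``pink clique'' and a ``green clique'' through $v$. Globally, by the structural description of elementary graphs due to Maffray and Reed, $G$ is an augmentation of the line graph $L(B)$ of a bipartite multigraph $B$: one begins with $L(B)$, whose maximal cliques are exactly the edge-stars of $B$ (split into an $X$-family and a $Y$-family matching the pink and green cliques), and then substitutes cliques for some vertices and attaches a controlled number of ``flat'' cliques. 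Because $\omega(G)=4$, the underlying $B$ has maximum degree at most $4$, hence $\chi'(B)\le 4$ by K\"onig's theorem, and every pink, green and flat clique of $G$ has at most $4$ vertices.

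I would finish via the kernel method, following Galvin's theorem that line graphs of bipartite multigraphs are chromatic choosable. Fix a proper edge-coloring of $B$ with at most $4$ colors and orient $G$ as follows: inside each $X$-star orient edges from the smaller color toward the larger, inside each $Y$-star from the larger toward the smaller, and orient each substituted or flat clique transitively. One then checks that (i) every out-degree is at most $\omega(G)-1=3$, so every list, having size $\omega(G)$, is strictly larger than the out-degree of its vertex; and (ii) the orientation is kernel-perfect, i.e. every induced sub-digraph has a kernel --- for the line-graph-of-bipartite part this is Galvin's observation (the local tournaments are transitive), a transitively oriented clique has a kernel (its source), and one glues these together. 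The kernel lemma then gives a proper $L$-coloring for every $\omega(G)$-list assignment $L$, so $\chi_l(G)=\omega(G)=\chi(G)$.

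I expect the real obstacle to be part (ii) on the augmented part of the structure: a substituted or flat clique is not literally a star of $L(B)$, so the naive Galvin orientation need not be kernel-perfect there, and these pieces must be oriented by hand while keeping all out-degrees at most $3$ and all lists strictly larger than the out-degrees. This is precisely where $\omega(G)=4$ is essential --- since every pink, green and flat clique has at most four vertices, there are only finitely many local configurations, and each can be oriented by inspection to be simultaneously out-degree-bounded and kernel-perfect. Removing the hypothesis $\omega(G)\le 4$, which is the general theorem proved in the present paper, is exactly what forces one to replace this finite case analysis by a uniform construction.
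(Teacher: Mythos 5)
First, note that the paper does not prove this statement at all: it is quoted from \cite{gravier2016choosability} as background, so there is no in-paper proof to compare your argument against. Judged on its own terms, your proposal has a fatal gap at its very first step. The ``standard fact'' that a clique cutset satisfies $\chi_l(G)=\max\{\chi_l(G_1),\chi_l(G_2)\}$ is false. The clique-tree argument you describe works for $\chi$ because one may permute the colors of $G_2$ to agree with $G_1$ on the shared clique; with list coloring the lists are fixed and no such permutation is available, so after coloring $G_1$ you are left with a precoloring-extension problem on $G_2$ that $k$-choosability of $G_2$ does not solve. This fails already for cutsets of size one: two $4$-cycles sharing a vertex have every block $2$-choosable, yet the whole graph is not $2$-choosable by the Erd\H{o}s--Rubin--Taylor characterization of $2$-choosable graphs (its core is neither $K_1$, an even cycle, nor a $\theta_{2,2,2m}$). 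The paper itself flags exactly this obstruction in its closing paragraph: the chromatic choosability of claw-free perfect graphs \emph{with} clique cutsets is stated there as open. If your reduction were valid, the full LCC for claw-free perfect graphs would follow immediately from the elementary and peculiar cases, which is not the state of the art; the actual proof in \cite{gravier2016choosability} has to treat clique cutsets with substantial bespoke work that your outline omits entirely.

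The second half of your sketch, for elementary graphs with $\omega=4$, is also not a proof as written. You correctly identify that the naive Galvin orientation breaks on the augmented parts, but you then defer the repair to ``orienting finitely many local configurations by inspection'' without exhibiting any of them or verifying kernel-perfectness of the glued orientation; kernel-perfectness is not a local property that survives arbitrary gluing, so this is precisely the content that would need to be supplied. For comparison, the known route (followed both by \cite{gravier2016choosability} and, for general $\omega$, by the present paper) avoids a global kernel argument: one inducts on the number of augments, invokes Galvin's theorem only for the unaugmented line graph $L(B)$, and colors each cobipartite augment directly via a list-coloring lemma for cobipartite graphs (Lemma \ref{lemma1} here), together with an auxiliary graph $G^*$ to control which colors the clique neighborhoods $N_X$ and $N_Y$ can consume. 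You would need both to replace the clique-cutset step with a genuine argument and to carry out the orientation analysis before this could be considered a proof.
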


\par Our focus is on the LCC for claw-free perfect graphs, as part of which, we have looked at list coloring of elementary graphs. In the next section, we provide a proof for the chromatic choosability of elementary graphs.

\section{Elementary Graphs}

\hfill \par There are different ways of defining an elementary graph. The structure of elementary graphs can be given using forbidden subgraphs. Before going to the definition of elementary graphs using forbidden subgraphs, let us see two definitions. An \textit{odd hole} in a graph $G$ is an induced subgraph of $G$ that is an odd cycle. An \textit{odd antihole} in a graph $G$ is an induced subgraph of $G$ that is the complement of an odd cycle.

\begin{theorem}[\cite{maffray1999description}]
A graph $G$ is an elementary graph if and only if it does not contain any of the eight graphs : claw, odd hole, odd antihole, pyramid, lighthouse, garden, colossus and mausoleum.
\end{theorem}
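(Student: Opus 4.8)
The plan is to prove the two implications separately, organised around the \emph{conflict graph} $\mathcal C(H)$ of a graph $H$: the vertex set of $\mathcal C(H)$ is $E(H)$, and two edges $e,f$ of $H$ are adjacent in $\mathcal C(H)$ exactly when $e\cup f$ induces a $P_3$ in $H$ (equivalently, $e$ and $f$ share an endpoint and their two other endpoints are non-adjacent). A $2$-edge-colouring of $H$ is a witness of elementarity precisely when it is a proper $2$-colouring of $\mathcal C(H)$, so $H$ is elementary if and only if $\mathcal C(H)$ is bipartite; moreover, if $H$ is an induced subgraph of $G$ then $\mathcal C(H)=\mathcal C(G)[E(H)]$, so elementarity is hereditary. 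Hence for the forward implication it suffices to show that each of the eight graphs has a non-bipartite conflict graph, and the substantive work is the converse.

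For the ``only if'' direction: for the claw the three edges at the centre are pairwise in conflict, so $\mathcal C$ contains a triangle. For an odd hole $C_n$, consecutive edges conflict and non-consecutive edges are disjoint, so $\mathcal C(C_n)\cong C_n$, odd. For an odd antihole $\overline{C_n}$ with $n=2k+1\ge 5$, the ``long'' chords $\{i,i+k\}$, $i\in\mathbb Z_n$, are edges of $\overline{C_n}$ and form an $n$-cycle inside $\mathcal C(\overline{C_n})$: consecutive ones $\{i,i+k\}$ and $\{i+k,i+2k\}$ meet at $i+k$ while their remaining endpoints $i$ and $i+2k\equiv i-1\pmod n$ are consecutive on $C_n$, hence non-adjacent in $\overline{C_n}$; since $\gcd(n,k)=1$ these $n$ chords do form a single cycle, and $n$ is odd. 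Finally, the five sporadic graphs pyramid, lighthouse, garden, colossus and mausoleum are of bounded size, and for each of them an odd cycle in the conflict graph can be exhibited by a finite check. Together with heredity this yields the ``only if'' direction.

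For the converse, assume $G$ contains none of the eight graphs. Having no odd hole and no odd antihole, $G$ is perfect by the strong perfect graph theorem, so $G$ is claw-free perfect. Decompose $G$ along its clique cutsets into atoms (maximal induced subgraphs with no clique cutset); then $G$ is built from its atoms by repeated clique-sums, and each atom is a claw-free perfect graph with no clique cutset, hence is elementary or peculiar by the decomposition theorem of Chv\'atal and Sbihi \cite{chvatal1988recognizing}. No atom can be peculiar: one shows that every peculiar graph contains one of the five sporadic graphs as an induced subgraph (indeed the smallest peculiar graph is already non-elementary), whereas the atoms, being induced subgraphs of $G$, contain none of the eight. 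Thus every atom is elementary, and it remains to prove that a clique-sum of elementary graphs is again elementary; for this I would show that an elementary claw-free graph admits a witnessing $2$-edge-colouring that is \emph{normalised} at any prescribed clique cutset $C$ -- each vertex of $C$ uses at most one colour on its edges into each of the two sides -- and that normalised colourings of the two parts can be matched up after at most swapping the two colours on one side.

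The main obstacle lies in this last step and in the peculiar-graph claim. For the gluing, the subtle point is that an induced $P_3$ can straddle the cutset (a vertex $a$ on one side, a cutset vertex $b$, a vertex $c$ on the other side, with $a\not\sim c$ automatic), so the two colourings must be reconciled on all edges incident to $C$ as well as on the edges inside $C$; claw-freeness is precisely what forces $N(b)$ on each side to be a clique whenever $b$ also has a neighbour on the other side, and this is the leverage that makes normalisation possible, but carrying it through is the technical heart of the Chv\'atal--Sbihi analysis of elementary graphs. Establishing that every peculiar graph contains one of the five named obstructions likewise needs a genuine case analysis of the peculiar construction. A more self-contained route for the converse would take a shortest odd cycle in $\mathcal C(G)$ and show that it forces one of the eight graphs to appear induced in $G$, or, equivalently, extract directly from the eight-graph-free hypothesis the bipartite multigraph $B$ and the set of its ``flat'' edges for which $G$ is an augmentation of the line graph $L(B)$; pinning down $B$ from the forbidden-subgraph condition is where the real work of the theorem resides.
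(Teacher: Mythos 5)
First, note that the paper does not prove this statement: it is imported verbatim from \cite{maffray1999description}, so there is no in-paper argument to compare yours against, and your proposal must stand on its own. Your ``only if'' direction essentially does: your conflict graph is exactly the Gallai graph $Gal(G)$ of the paper's Theorem 2.2, heredity of elementarity under induced subgraphs is correct, and the odd closed walks you exhibit in the conflict graphs of the claw, of $C_{2k+1}$ and of $\overline{C_{2k+1}}$ check out (the five sporadic graphs are left to a finite verification, which is acceptable in principle but is not actually displayed).

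The converse, however, has a genuine gap. The pivotal reduction --- ``it remains to prove that a clique-sum of elementary graphs is again elementary'' --- is false as stated: the claw itself is the clique-sum of three copies of $K_2$ over a single vertex, each $K_2$ is elementary, and the claw is not. The lemma you actually need is that a graph containing none of the eight obstructions, all of whose clique-cutset atoms are elementary, is itself elementary; all of the content lives in the conflicts that straddle the cutset, and your proposed repair (a ``normalised'' witnessing $2$-colouring at the cutset, reconciled by a single colour swap) is itself an unproven claim. Indeed, the Gallai graph of an atom may have many bipartite components whose $2$-colourings can be flipped independently, and chains of conflicts inside one atom can force two edges from a cutset vertex into the same side to receive different colours even though those two edges do not conflict with each other; nothing in your sketch rules this out or shows that such a situation forces one of the eight graphs. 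The assertion that every peculiar graph contains one of the five sporadic graphs is likewise needed to discard peculiar atoms and is stated without proof. Until the gluing lemma and the peculiar-graph claim are established, the converse is an outline rather than a proof; as you yourself observe, the shortest-odd-cycle analysis of $Gal(G)$, or the direct extraction of the bipartite multigraph $B$ and the augmented flat edges from the eight-graph-free hypothesis, is where the real work of \cite{maffray1999description} resides, and none of it is carried out here.
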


Another way of defining an elementary graph is using the Gallai graph. The \textit{Gallai graph} of a given graph $G$, \textit{Gal(G)} is the graph with vertex set as edge set of $G$ and edges are pairs of edges in $G$ which induces a $P_3$.

\begin{theorem}[\cite{maffray1999description}]
A graph $G$ is elementary if and only if \textit{Gal(G)} is bipartite.
\end{theorem}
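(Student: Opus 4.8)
The plan is to prove the equivalence by directly translating the defining condition of an elementary graph into a proper $2$-coloring of the vertices of $\mathrm{Gal}(G)$, and then invoking the standard characterization that a graph is bipartite if and only if it admits a proper $2$-coloring. The crucial observation is that the two notions of ``coloring'' involved here live on the same ground set: a $2$-edge-coloring of $G$ is literally the same object as a $2$-vertex-coloring of $\mathrm{Gal}(G)$, since $V(\mathrm{Gal}(G)) = E(G)$ by definition.

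To make this precise, I would first fix a color set $\{\text{pink}, \text{green}\}$ and note that any map $c \colon E(G) \to \{\text{pink},\text{green}\}$ may be read simultaneously as a map $c \colon V(\mathrm{Gal}(G)) \to \{\text{pink},\text{green}\}$. The next step is to match the two adjacency conditions. Two edges $e, f \in E(G)$ are adjacent in $\mathrm{Gal}(G)$ exactly when they induce a $P_3$ in $G$; that is, when $e$ and $f$ share a common vertex and their two remaining endpoints are non-adjacent in $G$. Hence $c$ assigns distinct colors to the edges of every induced $P_3$ of $G$ if and only if $c$ assigns distinct colors to the two endpoints of every edge of $\mathrm{Gal}(G)$, i.e., if and only if $c$ is a proper $2$-coloring of $\mathrm{Gal}(G)$.

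With this dictionary in hand, the theorem reads off immediately: $G$ is elementary precisely when such a $c$ exists, which happens precisely when $\mathrm{Gal}(G)$ admits a proper $2$-coloring, which in turn happens precisely when $\mathrm{Gal}(G)$ is bipartite. The only point requiring genuine care, and the step I would state most carefully, is the induced-ness qualifier: two edges sharing a vertex whose far endpoints are adjacent form a triangle rather than an induced $P_3$, so such a pair is deliberately \emph{non-adjacent} in $\mathrm{Gal}(G)$, and correspondingly the elementary condition places no constraint on their colors. Because the Gallai-graph adjacency is defined through induced $P_3$'s rather than through mere edge-incidence, the two conditions align perfectly and there is no hidden discrepancy to reconcile. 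No odd-cycle or structural argument is needed beyond this definitional matching together with the textbook equivalence between bipartiteness and $2$-colorability.
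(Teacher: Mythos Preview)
Your argument is correct: the equivalence is essentially definitional once one observes that $V(\mathrm{Gal}(G))=E(G)$ and that adjacency in $\mathrm{Gal}(G)$ is defined precisely by the induced-$P_3$ relation, so a $2$-edge-coloring of $G$ witnessing the elementary property is exactly a proper $2$-vertex-coloring of $\mathrm{Gal}(G)$, and bipartiteness follows by the standard $2$-colorability criterion. Your care about the induced-ness qualifier (triangle edges are non-adjacent in $\mathrm{Gal}(G)$) is exactly the point that makes the dictionary go through cleanly.

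There is nothing to compare against in the paper itself: this theorem is quoted from Maffray and Reed (the reference \texttt{maffray1999description}) without proof, as background for the structural description of elementary graphs. The paper's own contribution begins later with the chromatic-choosability argument, and it uses the augmentation characterization (Theorem~\ref{theorem1}) rather than the Gallai-graph one. Your proof is the natural one-line unpacking of the definitions and is what one would expect the original source to contain.
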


\par A \textit{cobipartite graph} is the complement of a bipartite graph. An edge in a graph $G$ is called \textit{flat edge} if it does not lie in a triangle. Let $e=xy$ be a flat edge in the graph $G$ and $H=(X,Y, E_{XY})$ be a cobipartite graph which is disjoint from $G$. We assume $E_{XY}$ is non-empty and that $X \cup Y$ does not induce a clique. We construct a new graph by deleting the vertices $x,y$ and replacing them with $H$ such that each vertex in $X$ is adjacent to the vertices in $N(x)-\{y\}$ and each vertex in $Y$ is adjacent to the vertices in $N(y)-\{x\}$, where $N(x)$ and $N(y)$ denotes the neighbors of $x$ and $y$ respectively. Then, the resulting graph is called the \textit{augmentation} of $G$ along $xy$ and $H$ is called \textit{augment} of $xy$. The augmentation of a graph $G$ is the graph obtained by augmenting $h$ pairwise non-incident flat edges $x_1y_1, x_2y_2,...,x_hy_h$ by $h$ pairwise disjoint cobipartite graphs $H_1, H_2,...,H_h$. 

\begin{theorem} [\cite{maffray1999description}]\label{theorem1}
Let $G$ be a graph. Then, $G$ is elementary if and only if it is an augmentation of a line graph $L(B)$ of a bipartite multigraph $B$.
\end{theorem}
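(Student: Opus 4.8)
The plan is to prove the two implications separately, working with the edge-coloring definition of elementary: a coloring of $E(G)$ by colors pink and green in which every induced $P_3$ gets one pink and one green edge. For sufficiency I would first note that the line graph $L(B)$ of a bipartite multigraph $B$ with sides $U,W$ is elementary: color an edge $\{e,f\}$ of $L(B)$ pink when the vertex of $B$ shared by $e$ and $f$ lies in $U$, green when it lies in $W$ (edges joining two parallel edges of $B$ lie on no induced $P_3$, so they may be colored arbitrarily); in an induced $P_3$ of $L(B)$ on vertices $e_1,e_2,e_3$ the edges $e_1,e_3$ are disjoint as edges of $B$, so $e_1\cap e_2$ and $e_2\cap e_3$ are the two ends of $e_2$, one in $U$ and one in $W$, hence the two edges of the $P_3$ differ in color. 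Next I would show augmentation along a flat edge preserves elementarity. If $xy$ is flat in an elementary $G'$ then, for each $a\in N(x)\setminus\{y\}$, the set $\{y,x,a\}$ induces a $P_3$, so $xa$ has the color opposite to $xy$; hence every edge at $x$ or at $y$ other than $xy$ shares that opposite color, and $N(x)\setminus\{y\}$ and $N(y)\setminus\{x\}$ are disjoint cliques. When $x,y$ are replaced by a cobipartite augment $H=(X,Y,E_{XY})$, I would keep all colors of edges missing $x$ and $y$, give every edge inside $X$, every edge inside $Y$, and every edge from $X$ to $N(x)\setminus\{y\}$ or from $Y$ to $N(y)\setminus\{x\}$ the color opposite to $xy$, and give every edge of $E_{XY}$ the color of $xy$; a case analysis on how many vertices of an induced $P_3$ of the augmented graph lie in $H$ — using flatness of $xy$ and that $X$ (resp.\ $Y$) is complete to $N(x)\setminus\{y\}$ (resp.\ $N(y)\setminus\{x\}$) — then confirms the coloring is valid. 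Because the $h$ augmented flat edges are pairwise non-incident, each remains flat after the other augmentations and the local colorings do not clash, so all $h$ augmentations can be performed simultaneously.

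For necessity, I would fix a pink/green coloring of an elementary graph $G$ and work with its pink and green cliques (maximal cliques of $G$ all of whose edges have a single color), using the basic fact that if two pink edges share a vertex then their other two ends are adjacent in $G$ (otherwise an induced $P_3$ would carry two pink edges), and likewise for green. The steps are: (i) identify the maximal induced cobipartite subgraphs $H$ of $G$ that sit on a flat edge and inside which the pink/green clique structure is not already clean — these are the prospective augments; (ii) contract each such $H$ to a single flat edge $x_Hy_H$, obtaining a graph $G_0$; (iii) prove that in $G_0$ every vertex lies in exactly one pink clique and exactly one green clique, and every edge of $G_0$ lies in a pink or a green clique, so that by a Krausz-type characterization of line graphs $G_0=L(B)$, where $V(B)$ is the set of pink cliques together with the set of green cliques, the two colors giving a bipartition of $V(B)$; (iv) conclude that $G$ is the augmentation of $L(B)=G_0$ along the edges $x_Hy_H$ by the $H$, after checking $E_{XY}\neq\emptyset$ and that $X\cup Y$ is not a clique for each.

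The hard part will be step (iii). First, one must show that away from the cobipartite spots the pink/green clique structure really is that rigid — each vertex in a unique pink clique and a unique green clique, each edge in one such clique; this is exactly where the forbidden-subgraph description of elementary graphs (no claw, odd hole, odd antihole, pyramid, lighthouse, garden, colossus, or mausoleum) must be used, since every degenerate local configuration that would spoil the rigidity contains one of those eight obstructions. Second, one must be able to choose the cobipartite spots so that the flat edges $x_Hy_H$ are pairwise non-incident, which is what makes the simultaneous contractions in step (ii) legitimate and matches the definition of augmentation. Granting these, the bipartiteness of $B$ is immediate from the two-coloring of the cliques, and the multigraph bookkeeping — parallel edges of $B$ arising from pairs of vertices of $G_0$ lying in a common pink clique and a common green clique — is routine. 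I expect proving the Krausz-type lemma \emph{relative to the two color classes} to be the single longest piece of the argument.
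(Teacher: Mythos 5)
First, a point of comparison: the paper does not prove this statement at all --- it is imported verbatim from \cite{maffray1999description} as a known structure theorem --- so there is no internal proof to measure your attempt against; your proposal has to stand on its own.

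Your sufficiency direction is essentially complete and correct: the bipartition-based two-coloring of $L(B)$, the observation that edges of $L(B)$ joining parallel edges of $B$ lie on no induced $P_3$, the fact that all edges at an endpoint of a flat edge $xy$ other than $xy$ carry the opposite color so that $N(x)\setminus\{y\}$ and $N(y)\setminus\{x\}$ are disjoint cliques, and the explicit extension of the coloring to the augment all check out, and the case analysis you sketch does close. The necessity direction, however, contains a genuine gap rather than a proof: steps (i) and (iii) of your plan \emph{are} the theorem. You assert that away from the ``cobipartite spots'' each vertex lies in exactly one pink and one green clique and each edge in one of them, justified by the claim that ``every degenerate local configuration that would spoil the rigidity contains one of the eight obstructions''; that claim is precisely the long case analysis constituting the bulk of Maffray and Reed's argument, and stating it is not proving it. Likewise, you never say how to actually locate the augments: ``maximal induced cobipartite subgraphs that sit on a flat edge and inside which the clique structure is not already clean'' is not a definition one can verify, and even granted such an $H=(X,Y,E_{XY})$ you must show every vertex of $X$ has the \emph{same} external neighbourhood (a clique), that $X\cup Y$ attaches to the rest of $G$ exactly as a flat edge does, and that the contracted edges $x_Hy_H$ can be chosen pairwise non-incident --- all of which you defer with ``one must be able to'' and ``granting these.'' A further caution: in the source, the forbidden-subgraph characterization and the augmentation characterization are proved together, so leaning on the eight-graph list as a black box to establish the Krausz-type rigidity risks circularity unless you track which implication of \cite{maffray1999description} you are actually allowed to assume.
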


In Fig. \ref{Augmentation}, we have shown how the elementary graph in Fig. \ref{Elementary graph} has been expressed as an augmentation of a line graph of a bipartite multigraph.

\begin{figure}[h]
\centering
    \begin{subfigure}{0.3 \textwidth}
        \centering
        \includegraphics[height=2.8cm]{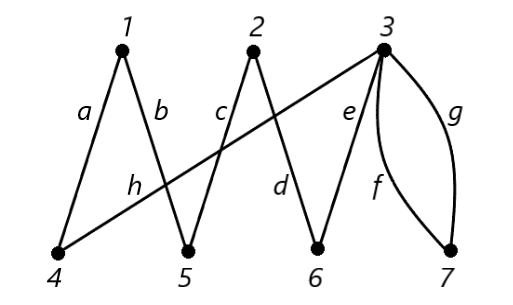}
        \caption{Bipartite multigraph $B$}
        \label{Bipartite multigraph}   
    \end{subfigure}
    \hspace{3cm}
    \begin{subfigure}{0.3 \textwidth}
        \centering
        \includegraphics[height=4.5cm]{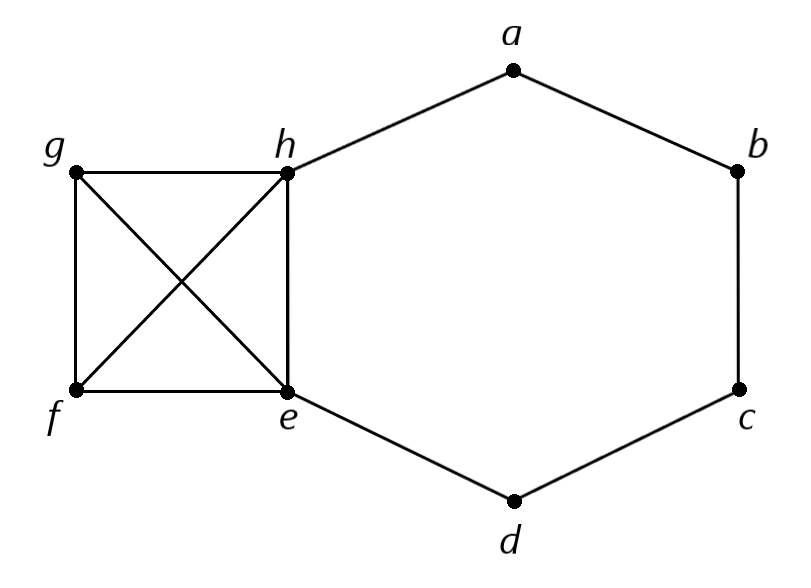}
        \caption{$L(B)$ the Line graph of $B$}
        \label{Line graph}  
    \end{subfigure}
    \newline
    \begin{subfigure}{0.3 \textwidth}
        \centering
        \includegraphics[height=6.5cm]{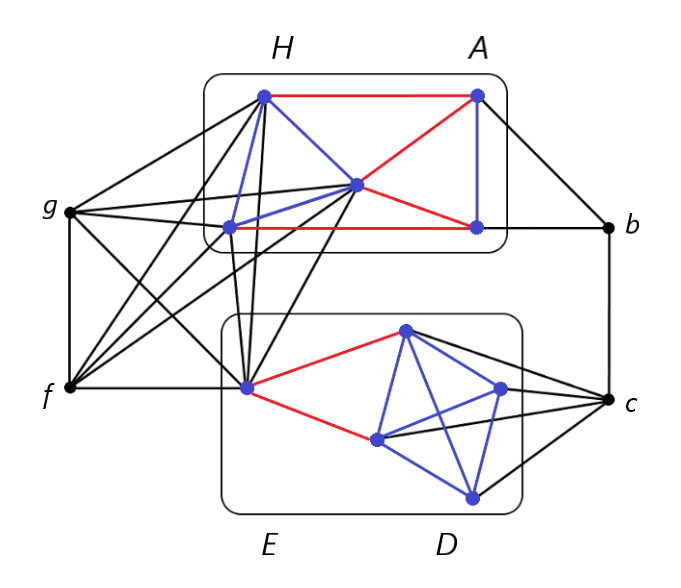}
        \caption{Augmentation of $L(B)$}
        \label{Augmented Elementary graph}   
    \end{subfigure}
    \caption{Elementary graph as an augmentation of a line graph of a bipartite multigraph}
    \label{Augmentation}
\end{figure}

\begin{lemma}[\cite{gravier2016choosability}]\label{lemma1}
Let $H=(X,Y,E_{XY})$ be a cobipartite graph such that $|X| \geq |Y|$ and there are $|Y|$ non-edges in $H$ and it forms a matching in $\overline{H}$. Let $L$ be a list assignment on $H$ such that $|L(x)| \geq |X|, \forall x \in X$ and $|L(y)| \geq |Y|, \forall y \in Y$. Then $H$ is $L$-colorable.
\end{lemma}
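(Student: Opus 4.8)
The plan is to recast $L$-colourability of $H$ as the existence of a system of distinct representatives (SDR) for a suitable family of lists, and then to verify Hall's condition: directly in a generic situation, and through an inductive reduction otherwise. Since $|X|\ge|Y|$ and the $|Y|$ non-edges of $H$ form a matching in the bipartite graph $\overline H$ (with sides $X,Y$), I would write $X=\{x_1,\dots,x_n\}$, $Y=\{y_1,\dots,y_k\}$ with $n\ge k$ so that the non-edges are exactly $x_1y_1,\dots,x_ky_k$; every other pair of vertices of $H$ is an edge, so $H$ is the complete multipartite graph with parts $\{x_i,y_i\}$ for $i\le k$ and $\{x_j\}$ for $k<j\le n$. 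Put $m_i=L(x_i)\cap L(y_i)$ and $D=\{i\le k:m_i=\emptyset\}$. In a proper colouring the parts receive pairwise disjoint colour sets, and $x_i,y_i$ may share a colour only if $i\notin D$; thus a proper $L$-colouring of $H$ is precisely the choice of a ``merge set'' $I\subseteq\{1,\dots,k\}\setminus D$ together with an SDR of
\[
\mathcal L_I=\{\,m_i:i\in I\,\}\ \cup\ \{\,L(x_i),\,L(y_i):i\le k,\ i\notin I\,\}\ \cup\ \{\,L(x_j):k<j\le n\,\},
\]
a family of $n+k-|I|$ lists. It therefore suffices to find one $I$ for which $\mathcal L_I$ admits an SDR, and I would induct on $k$.

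\textbf{Generic case: $\{m_i:i\notin D\}$ has an SDR.} Take $I=\{1,\dots,k\}\setminus D$, so $\mathcal L_I$ consists of the $k-|D|$ merged lists $m_i$ (which satisfy Hall's condition, as they have an SDR), the $|D|$ ``split pairs'' $L(x_i),L(y_i)$ with $i\in D$ (of sizes $\ge n$ and $\ge k$), and $n-k$ lists $L(x_j)$ of size $\ge n$: in total $n+|D|$ lists. I would check Hall's condition for an arbitrary subfamily $\mathcal S$. If $\mathcal S$ contains both $L(x_i)$ and $L(y_i)$ for some $i\in D$, then (being disjoint) their union already has size $\ge n+k\ge n+|D|\ge|\mathcal S|$. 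If $\mathcal S$ contains a list of size $\ge n$, then either $|\mathcal S|\le n\le|\bigcup\mathcal S|$, or $|\mathcal S|\ge n+1$, in which case $\mathcal S$ omits fewer than $|D|$ of the $n+|D|$ lists and so cannot miss a list from each of the $|D|$ split pairs, hence contains a whole split pair and the first observation applies. If $\mathcal S$ contains no list of size $\ge n$ but meets some $L(y_i)$, then $|\mathcal S|\le(k-|D|)+|D|=k\le|\bigcup\mathcal S|$. If $\mathcal S$ uses only merged lists, Hall's condition holds by the choice of $I$. Hence $\mathcal L_I$ has an SDR, and $H$ is $L$-colourable; the base case $k=0$ is subsumed here, with $D=I=\emptyset$.

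\textbf{Degenerate case: $\{m_i:i\notin D\}$ has no SDR.} By the deficiency version of Hall I would pick $S$ with $|\bigcup_{i\in S}m_i|<|S|$ and $|S|$ minimal; then $|S|\ge2$ (each $m_i$ with $i\in S$ is non-empty), $|\bigcup_{i\in S}m_i|=|S|-1$, and, by minimality, for every $i_0\in S$ the family $\{m_i:i\in S\setminus\{i_0\}\}$ has an SDR, which must exhaust the $(|S|-1)$-element set $C:=\bigcup_{i\in S}m_i$. Colour each pair $\{x_i,y_i\}$ with $i\in S\setminus\{i_0\}$ by its representative in $C$, delete those $2(|S|-1)$ vertices, and delete the colours $C$ from every remaining list. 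The graph $H'$ left behind is again cobipartite, its $|Y'|=k-(|S|-1)$ non-edges form a matching, $|X'|=n-(|S|-1)\ge|Y'|$, and each surviving list lost at most $|C|=|S|-1$ colours, so $|L'(x)|\ge|X'|$ and $|L'(y)|\ge|Y'|$ still hold. As $|Y'|<k$, the induction hypothesis gives a proper $L'$-colouring of $H'$; it avoids the colours of $C$, hence together with the colouring already placed on the deleted pairs it is a proper $L$-colouring of $H$.

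\textbf{Main obstacle.} The delicate step is the Hall verification in the generic case; the reason it goes through is that a split pair $i\in D$ contributes two \emph{disjoint} lists whose union has size $\ge|L(x_i)|+|L(y_i)|\ge n+k$, which dominates the total number $n+|D|$ of lists in $\mathcal L_I$, so large deficient subfamilies cannot occur. The other point needing care is taking the Hall-violating set $S$ to be minimal in the degenerate case: this is exactly what forces the surviving pair's common list into $C$ and keeps both list-size inequalities tight after the deletion.
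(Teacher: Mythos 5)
The paper does not prove this lemma at all: it is imported verbatim from \cite{gravier2016choosability}, so there is no in-paper argument to compare against. Your proposal is, as far as I can check, a correct and complete self-contained proof. The reformulation is sound (since $X$ and $Y$ are cliques and the $|Y|$ non-edges form a matching saturating $Y$, $H$ really is the complete multipartite graph with parts $\{x_i,y_i\}$ and singletons, so an $L$-colouring is exactly a merge set $I$ plus an SDR of $\mathcal{L}_I$). In the generic case your four-way Hall check covers every subfamily: a subfamily containing a full split pair has union of size at least $n+k\ge n+|D|=|\mathcal{L}_I|$; one containing a list of size $\ge n$ either has at most $n$ members or, by the pigeonhole on the at most $|D|-1$ omitted lists, contains a full split pair; the remaining cases are bounded by $k$ or by the assumed SDR of the $m_i$. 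In the degenerate case, taking the Hall-violating set $S$ minimal correctly forces $|\bigcup_{i\in S}m_i|=|S|-1$ and the exhaustion of $C$ by the SDR of $S\setminus\{i_0\}$, and the reduced instance $H'$ with lists $L(v)\setminus C$ satisfies all hypotheses with $|Y'|<|Y|$, so the induction closes; the recombination is proper because the $H'$-colouring avoids $C$ entirely. Whether this coincides with the argument in \cite{gravier2016choosability} I cannot certify from the present manuscript, but their proof is also Hall-theorem based, and in any event your write-up would serve as a valid replacement for the citation.
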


\begin{lemma}\label{lemma2}
Let $G$ be a graph and $L$ be a list assignment of $G$. If there exist a clique $Q$ such that $|L(Q)|<|Q|$, where $|L(Q)|=|\cup_{v \in Q}L(v)|$, then $G$ is not $L$-colorable. 
\end{lemma}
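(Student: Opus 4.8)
The plan is to argue by the pigeonhole principle, contrapositively. Suppose, for contradiction, that $G$ does admit a proper $L$-coloring $c$. I would first observe that, since $Q$ is a clique, any two distinct vertices $u,v \in Q$ are adjacent, so properness of $c$ forces $c(u) \neq c(v)$; hence the restriction $c|_Q$ is injective on $Q$. Next, for each $v \in Q$ we have $c(v) \in L(v)$, and trivially $L(v) \subseteq \bigcup_{w \in Q} L(w) = L(Q)$, so $c|_Q$ is an injection from $Q$ into the set $L(Q)$.

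An injection from $Q$ into $L(Q)$ immediately yields $|Q| \le |L(Q)|$, contradicting the hypothesis $|L(Q)| < |Q|$. Therefore no proper $L$-coloring of $G$ can exist, which is exactly the assertion that $G$ is not $L$-colorable. Equivalently, one can state the argument directly without contradiction: any proper $L$-coloring would have to use $|Q|$ pairwise distinct colors all lying in the $|L(Q)|$-element set $L(Q)$, which is impossible when $|L(Q)| < |Q|$.

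I do not anticipate any real obstacle here: the statement is an immediate consequence of the definition of a proper list coloring together with counting. The only point requiring (minimal) care is to state cleanly why adjacency inside $Q$ prevents any color from being reused, but this is nothing more than the definition of properness. This lemma is presumably included only to be invoked later as a convenient certificate of non-colorability (e.g., when analyzing cliques arising from the augmentation structure of elementary graphs).
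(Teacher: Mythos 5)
Your proof is correct and follows essentially the same route as the paper: both argue that a proper $L$-coloring restricted to the clique $Q$ is an injection into $L(Q)$, forcing $|Q| \le |L(Q)|$ and contradicting the hypothesis. Your write-up is in fact slightly cleaner in making the injectivity explicit.
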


\renewcommand\qedsymbol{$\blacksquare$}

\begin{proof}    
Let us assume that $G$ is $L$-colorable. For any clique $Q$ in $G$, the vertices in $Q$ get distinct colors. Then,
\begin{center}
    $|L(Q)|=|\cup_{v \in Q}L(v)| \geq |\cup_{v \in Q}c(v)|=|Q|$
\end{center}
So, $G$ is $L$-colorable implies $|L(Q) \geq |Q|$, which is a contradiction.
\end{proof}

\begin{lemma}\label{lemma3}
Let $G(X, Y)$ be a bipartite graph with matching number $\mu(G)$. Then for the cobipartite graph $\overline{G}$, we have, 
\begin{center}
$\omega(\overline{G})=|X|+|Y|-\mu(G)$
\end{center}
\end{lemma}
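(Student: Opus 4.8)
The plan is to translate the clique-number question on $\overline{G}$ into an independence-number question on $G$, and then invoke the classical min-max theorems for bipartite graphs. First I would note that, by the standing convention, $\overline{G}$ is the complement taken on the vertex set $X \cup Y$; since $G$ has no edges inside $X$ and none inside $Y$, both $X$ and $Y$ induce cliques in $\overline{G}$, which is what makes $\overline{G}$ cobipartite. A set $S \subseteq X \cup Y$ induces a clique in $\overline{G}$ if and only if $S$ induces no edges in $G$; because $G$ is bipartite, every edge of $G$ joins $X$ to $Y$, so this is exactly the condition that $S$ is an independent set of $G$. Hence $\omega(\overline{G}) = \alpha(G)$, the independence number of $G$.

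Next I would apply Gallai's identity $\alpha(G) + \tau(G) = |V(G)|$, where $\tau(G)$ is the minimum size of a vertex cover of $G$ (a maximum independent set and a minimum vertex cover are complementary in any graph). This gives $\alpha(G) = |X| + |Y| - \tau(G)$. Finally, by the K\"onig--Egerv\'ary theorem, in the bipartite graph $G$ the minimum vertex cover number equals the maximum matching number, i.e.\ $\tau(G) = \mu(G)$. Substituting,
\begin{center}
$\omega(\overline{G}) = \alpha(G) = |X| + |Y| - \tau(G) = |X| + |Y| - \mu(G),$
\end{center}
which is the claimed equality.

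There is essentially no hard step here: the argument is a dictionary translation (clique in $\overline{G}$ $\leftrightarrow$ independent set in $G$) followed by two standard results. The only point requiring a little care is the first one, namely verifying that a clique of $\overline{G}$ is precisely an independent set of $G$ rather than merely a union $X' \cup Y'$ with no $G$-edges between $X'$ and $Y'$; this is immediate once one uses that all edges of the bipartite graph $G$ run between $X$ and $Y$, so no edge can be hidden inside $X'$ or inside $Y'$. If one prefers to avoid citing K\"onig's theorem as a black box, an alternative is to exhibit directly a maximum clique of $\overline{G}$ from a maximum matching $M$ of $G$: take all vertices unmatched by $M$ together with exactly one endpoint from each edge of $M$, chosen so that the resulting set is independent in $G$; but invoking K\"onig's theorem is cleaner and suffices.
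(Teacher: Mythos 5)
Your proof is correct and follows essentially the same route as the paper: identify $\omega(\overline{G})$ with $\alpha(G)$ and then apply the Gallai identity together with K\"onig's theorem (which the paper cites jointly as the single identity $\alpha(G)+\mu(G)=|X|+|Y|$). Your version merely spells out the two classical ingredients separately and checks the clique--independent-set correspondence more carefully, which is a harmless elaboration.
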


\renewcommand\qedsymbol{$\blacksquare$}

\begin{proof}
By Gallai and Konig \cite{west2001introduction}, we have,
\begin{center}
$\alpha(G)+\mu(G)=|X|+|Y|$   
\end{center}
where $\alpha(G)$ is the independent number of $G$. As $\overline{G}$ is complement of $G$, we have $\omega(\overline{G})=\alpha(G)$. Substituting this in the above result, we get the desired result.
\end{proof}

Now, we give the proof of our main result.
\begin{theorem}
Let $G$ be an elementary graph. Then $\chi_l(G)=\chi(G)$.
\end{theorem}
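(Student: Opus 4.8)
The plan is to exploit the structural characterization in Theorem \ref{theorem1}: $G$ is obtained from the line graph $L(B)$ of a bipartite multigraph $B$ by augmenting a set of pairwise non-incident flat edges $x_1y_1,\dots,x_hy_h$ with cobipartite graphs $H_1,\dots,H_h$. The base case is $h=0$, i.e. $G=L(B)$ for a bipartite multigraph $B$. Here I would invoke the classical theorem of Galvin that $\chi_l(L(B))=\chi'(B)=\chi(L(B))$ for every bipartite multigraph $B$ (the List Coloring Conjecture for bipartite line graphs), which settles chromatic choosability of the unaugmented case directly. If one prefers to stay self-contained one can instead reprove this via the kernel/Galvin orientation argument, but citing Galvin is cleanest and legitimate.

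The inductive step is to show that augmenting one flat edge at a time preserves chromatic choosability. Fix a $k$-list assignment $L$ on the augmented graph $G$, where $k=\chi(G)=\omega(G)$; by Lemma \ref{lemma2} it suffices to produce \emph{some} proper $L$-coloring. Let $e=xy$ be a flat edge of the smaller graph $G'$ that was replaced by the cobipartite augment $H=(X,Y,E_{XY})$. Since $e$ is flat in $G'$, the set $N(x)\setminus\{y\}$ and $N(y)\setminus\{x\}$ are disjoint and there is no edge between them; moreover each vertex of $X$ sees exactly $N(x)\setminus\{y\}$ outside $H$ and each vertex of $Y$ sees exactly $N(y)\setminus\{x\}$. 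The idea is: color the rest of the graph first (using induction on $G'$ with a suitably modified one-element-smaller list assignment, treating $H$ as a single contracted flat edge), then extend the coloring to the vertices of $H$. After the outside is colored, each $x\in X$ has a residual list $L'(x)=L(x)\setminus c(N(x)\setminus\{y\})$ and each $y\in Y$ a residual list $L'(y)=L(y)\setminus c(N(y)\setminus\{x\})$. Using $\omega(G)=\omega(G'\!-\!e)+\omega(H)$ type clique-size accounting (here is where Lemma \ref{lemma3} enters, bounding $\omega(H)=|X|+|Y|-\mu(\overline H)$ in terms of matchings), one shows $|L'(x)|\ge |X|$ for all $x$ and $|L'(y)|\ge |Y|$ for all $y$; the structural hypothesis on augments — that $\overline H$ has a perfect matching on the $Y$ side, equivalently the non-edges form a matching — is exactly the situation covered by Lemma \ref{lemma1}, which then yields an $L'$-coloring of $H$, completing the extension.

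The main obstacle, and the part that needs genuine care, is the bookkeeping that guarantees the residual lists on $H$ are large enough, i.e. reconciling the global bound $k=\omega(G)$ with the local clique structure around the augment. One must check that when we contract $H$ back to the flat edge $xy$ to apply induction, the list sizes we assign to $x$ and $y$ (and to their neighborhoods) are consistent with $\omega(G'-e)\le k$ and simultaneously leave room for $|X|$ respectively $|Y|$ colors after the neighborhoods are colored — in particular handling the worst case where $N(x)\setminus\{y\}$ together with part of $X$ forms a large clique of $G$. A clean way to organize this is to prove by induction the slightly stronger statement: for every elementary $G$ and every list assignment $L$ with $|L(v)|\ge \omega(G)$ for all $v$, $G$ is $L$-colorable; the augments then only ever need lists of size $\omega(H)\le\omega(G)$ locally, and Lemma \ref{lemma1} applies because $|L'(x)|\ge \omega(G)-|N(x)\setminus\{y\}|\ge |X|$ by the clique bound. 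A secondary technical point is ensuring the $h$ augments, being pairwise non-incident, can be processed independently (or in sequence) without interference — this follows since the replaced flat edges share no vertices, so the neighborhoods $N(x_i)\setminus\{y_i\}$ touched by distinct augments are handled in disjoint inductive peels.
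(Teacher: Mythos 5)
Your overall skeleton matches the paper's: induction on the number $h$ of augments, Galvin's theorem for the base case $G=L(B)$, and, for the inductive step, coloring $G$ minus the last augment first and then extending into the cobipartite augment $H=(X,Y,E_{XY})$ via residual lists $L'(u)=L(u)\setminus c(N_X)$ or $L(u)\setminus c(N_Y)$. The accounting $|L'(x)|\ge \omega-|N_X|\ge|X|$ (and similarly for $Y$) is also exactly what the paper does. The gap is at the final step, where you assert that ``the structural hypothesis on augments --- that $\overline{H}$ has a perfect matching on the $Y$ side'' places you in the situation of Lemma \ref{lemma1}. There is no such hypothesis: by definition an augment is an arbitrary cobipartite graph with $E_{XY}\neq\emptyset$ and $X\cup Y$ not a clique, so the non-edges between $X$ and $Y$ need not form a matching, and Lemma \ref{lemma1} does not apply as stated. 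Concretely, when $|X|+|Y|>\omega$ the clique number $\omega'$ of $G[X\cup Y]$ can exceed $k_1=|X|$, and then, even though every individual residual list has size at least $k_1$ (resp.\ $k_2$), a clique $Q\subseteq X\cup Y$ of size $\omega'$ may satisfy $|L'(Q)|<|Q|$, which by Lemma \ref{lemma2} makes $G[X\cup Y]$ non-$L'$-colorable for that particular outside coloring $c$. Nothing in your argument rules this out; large individual lists do not prevent their union over a clique from being too small.

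This is precisely the case on which the paper spends most of its effort. Its resolution is not to fix one outside coloring and extend it, but to show that a \emph{good} outside coloring exists: it builds an auxiliary elementary graph $G^*$ with only $h-1$ augments, in which $X$ is shrunk to $k_1-\mu$ vertices (where $\mu$ is the matching number of $\overline{G[X\cup Y]}$, controlled via Lemma \ref{lemma3} and shown to satisfy $\mu<k_2$) so that $X^*\cup Y^*$ is a clique realizable inside the line graph of a modified bipartite multigraph $B^*$ obtained by duplicating the two edges of $B$ underlying the flat edge. Applying the induction hypothesis to $G^*$ with the list assignment $L^*$ that places $L(x_1)$ on all of $X^*$ and $L(y_1)$ on all of $Y^*$ forces the resulting coloring $c^*$ of the common exterior to avoid the bad residual configuration on at least one of $x_1,y_1$; transplanting $c^*$ back to $G$ then permits the extension. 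That transfer argument is the substantive new content of the proof, and your proposal --- which correctly flags this bookkeeping as ``the main obstacle'' --- does not supply it. Your base case and your remark that pairwise non-incident augments can be peeled one at a time are consistent with the paper and unproblematic.
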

\begin{proof}
As $G$ is an elementary graph, by Theorem \ref{theorem1}, it is an augmentation of the line graph of some bipartite multigraph $B$. Suppose $e_1,e_2,...,e_h$ are the flat edges in $L(B)$ that are augmented to obtain $G$. We prove the theorem using mathematical induction on $h$. 
\par For $h=0$, the graph has no augmentation and it is simply the line graph of some bipartite multigraph. In \cite{galvin1995list}, Galvin proved that the list chromatic index and chromatic index of a bipartite multigraph are equal. In other words, the list chromatic number and chromatic number of the line graph of a bipartite multigraph are equal. So the elementary graph with no augmentations is $\omega$-choosable.
\par Let us assume that for any elementary graph with at most $h-1$ augmentations, the theorem holds. Let $G$ be an elementary graph with $\omega(G)=\omega$ and has $h$ augmentations along edges $e_1,e_2,...,e_h$. Let $v_xv_{xy}$ and $v_yv_{xy}$ be the edges in $B$ whose corresponding edge is $e_h=xy$ in $L(B)$ and $H=(X,Y,E_{XY})$ be the $h^{th}$ augment of $G$. Let $L$ be an $\omega$-list assignment on $G$. By the hypothesis, $G-H$ is $L$-colorable. Let $c$ be an $L$-coloring of $G-(X \cup Y)$. We now list color $H$. 
\par In \cite{gravier2016choosability}, the authors have given proof for the list coloring of $H$ when $|X \cup Y| \leq \omega$. So, we check for $|X \cup Y|>\omega$. Let $N_X=N(X)-Y$ and $N_Y=N(Y)-X$. It can be seen that $N_X \cup X$ and $N_Y \cup Y$ induce cliques (Fig. \ref{G}). So, $|N_X \cup X| \leq \omega$ and $|N_Y \cup Y| \leq \omega$.\\
Define,
    $L'(u)=
    \begin{cases}
    L(u)-c(N_X), if \ u \in X \\ L(u)-c(N_Y), if \ u \in Y
    \end{cases}$.

So to show that $G$ is $\omega$-choosable, it is enough to find an $L'$-coloring for $G[X \cup Y]$.

\begin{figure}[h]
    \centering
    \includegraphics[height=5cm]{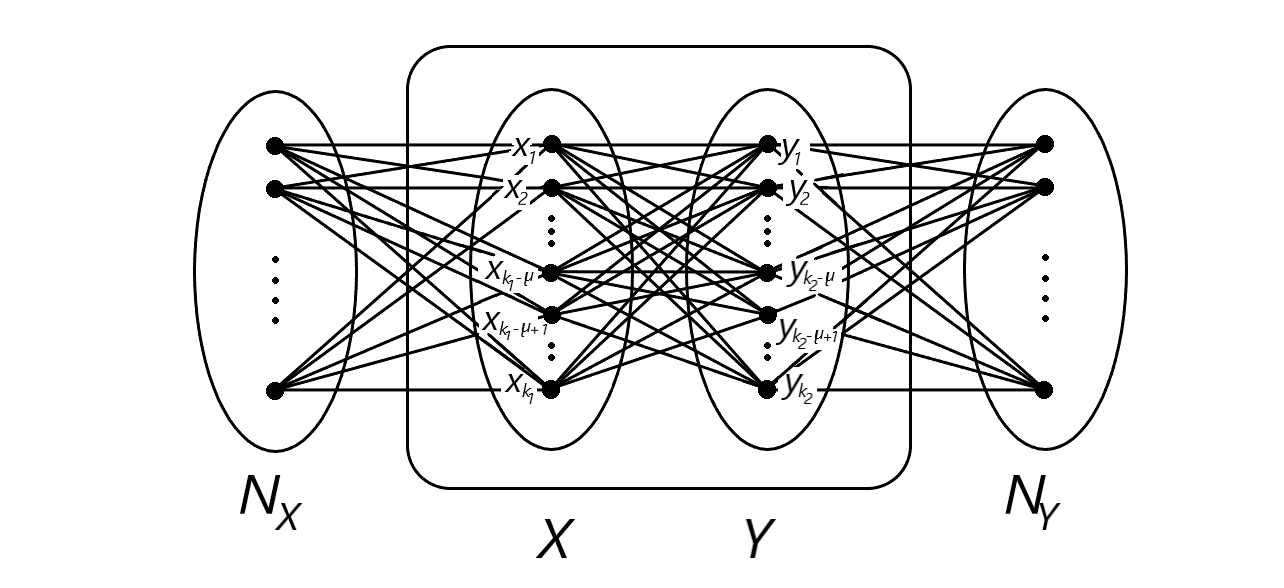}
    \caption{$X \cup Y$ in $G$}
    \label{G}
\end{figure}

\begin{case}
$|X| =\omega, |Y| \leq \omega$
{\fontfamily{} \selectfont
\par As $|X|=\omega, |Y| \leq \omega$, we have $|N_X|=0$ and $|N_Y| \geq 0$. Then, $|L'(x)|=\omega, \forall x \in X$ and $|L'(y)| \geq \omega, \forall y \in Y$.
Hence by Lemma \ref{lemma1}, $G[X \cup Y]$ can be $L'$-colored. 
}
\end{case}

\begin{case}
$|X|=k_1, |Y|=k_2 \ (k_2 \leq k_1 < \omega)$
{\fontfamily{} \selectfont
\par As $|X|=k_1,|Y|=k_2$, we have, 
\begin{center}
    $|N_X| \leq \omega-k_1$, $|N_Y| \leq \omega-k_2$ \\
    and $|L'(x)|\geq k_1, \forall x \in X$, $|L'(y)|\geq k_2, \forall y \in Y$
\end{center}
Assume that $G[X \cup Y]$ is not $L'$-colorable. Then, there exists a clique $Q'$ in $G[X \cup Y]$ that satisfies the inequality in Lemma \ref{lemma2}. Let $Q$ be a clique of size $\omega(G[X \cup Y])$ that contains $Q'$. If $Q$ is in $Y$, we have $|Q| \leq k_2$. But as $|L'(y)|\geq k_2, \forall y \in Y$, it would violate $|L'(Q)|<|Q|$ . So $Q$ is not in $Y$ and at least one vertex in $X$ is in $Q$. Then, we have,
\begin{center}
    $k_1 \leq |L'(Q)| < |Q| \leq \omega$
\end{center}
} 
\fontfamily{} \selectfont 
Suppose $|Q|=\omega'$. By Lemma \ref{lemma3}, we have $\mu=|X|+|Y|-\omega'$ where $\mu$ is the matching number of $\overline{G[X \cup Y]}$.

\begin{claim}
   $\mu<k_2$ 
\end{claim}
If $\mu \geq k_2$,
\begin{equation}
\begin{split}
|X| + |Y| - \omega'& \geq k_2\\
k_1 + k_2 - \omega'& \geq k_2 \\
k_1 - \omega'& \geq 0 \\
k_1 & \geq \omega'\\
\end{split}
\end{equation}
This is a contradiction to our assumption. \\

As $\mu<k_2$, there is at least one pair of vertices in $\overline{G[X \cup Y]}$, say $x_1,y_1$, which are not adjacent. That is, the edge $x_1y_1 \in Q$ for all $Q$ of size $\omega'$. If we take $|L'(Q)|=l$, then $|L'(x_1) \cup L'(y_1)| \leq l$. In other words, if $|(L(x_1)-c(N_X)) \cup (L(y_1)-c(N_Y))| \leq l$, then $G$ is not $L$-colorable.\\

Let $L(x_1)=\{c_1,c_2,...,c_{l_\cap}, c_{l_\cap+1},...,c_{l_1},c_{l_1+1},...,c_{\omega}\}$, $L(y_1)=\{c_1,c_2,...,c_{l_\cap}, c_{l_\cap+1}',...,c_{l_2}',c_{l_2+1}',...,c_{\omega}'\}$, $L(x_1) \cap c(N_X)=\{c_{l_1+1},...,c_{\omega}\}$ and $L(y_1) \cap c(N_Y)=\{c_{l_2+1}',...,c_{\omega}'\}$,where $l_\cap=|L(x_1) \cap L(y_1)|$. If $l_1+l_2-l_\cap \leq l$ then
 $G$ is not $L$-colorable.\\

\par Let us construct a new graph $G^*$ by replacing $X=\{x_1,x_2,...,x_{k_1}\}$ and $Y=\{y_1,y_2,...,y_{k_2}\}$ with $X^*=\{x_1^*,x_2^*,...,x_{k_1-\mu}^*\}$ and $Y^*=\{y_1^*,y_2^*,...,y_{k_2}^*\}$ respectively (see Fig.\ref{G*}), and $X^* \cup Y^*$ is a clique. It can be seen that $G^*$ is isomorphic to the graph obtained from $G$ by deleting the $\mu$ vertices $\{x_{k_1-\mu+1},x_{k_1-\mu+2},...,x_{k_1}\}$ in $X$. Let $B^*$ be a bipartite multigraph obtained from $B$ by duplicating $(k_1-\mu)$ times the edge $v_xv_{xy}$  and $k_2$ times the edge $v_yv_{xy}$.  $G^*$ can be obtained from $L(B^*)$ by augmenting $e_1,e_2,...,e_{h-1}$ edges with $h-1$ augments. Hence $G^*$ is an elementary graph with $h-1$ augmentations.

\begin{figure}[h]
    \centering
    \includegraphics[height=5cm]{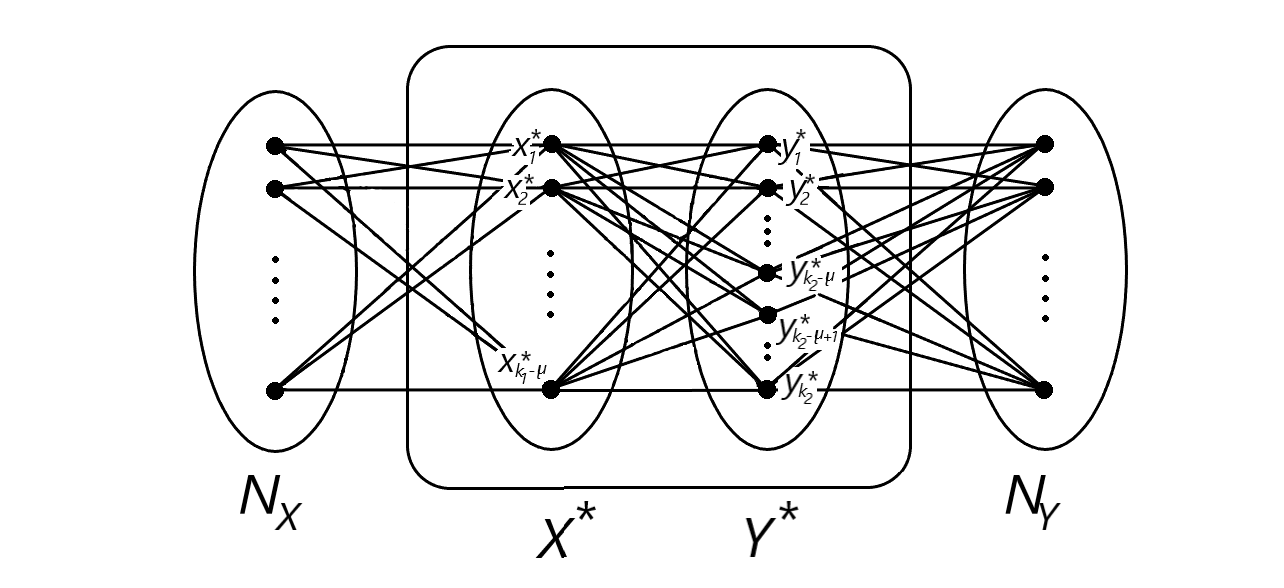}
    \caption{$X^* \cup Y^*$ in $G^*$}
    \label{G*}
\end{figure}

\par According to our induction hypothesis, $G^*$ is $\omega$-choosable. Let us define a list assignment $L^*$ as follows :
\begin{center}
$L^*(u)=\begin{cases}
L(u), u \in G^*-(X^* \cup Y^*)\\
L(x_1), u \in X^*\\
L(y_1), u \in Y^*
\end{cases}$.    
\end{center}
Let $c^*$ be an $L^*$-coloring of $G^*-(X^* \cup Y^*)$.\\

\begin{claim}
   $L^*(x_1) \cap c^*(N_X) \neq \{c_{l_1+1},...,c_{\omega}\}$ or $L^*(y_1) \cap c^*(N_Y) \neq \{c_{l_2+1}',..., c_{\omega}'\}$. 
\end{claim} 
\par Suppose $L^*(x_1) \cap c^*(N_X)=\{c_{l_1+1},..., c_{\omega}\}$ and $L^*(y_1) \cap c^*(N_Y)=\{c_{l_2+1}',..., c_{\omega}'\}$. Then, the vertices in $X^*$ will get their colors from $\{c_1,c_2,...,c_{l_\cap}, c_{l_\cap+1},...,c_{l_1}\}$ and the vertices in $Y^*$ will get their colors from $\{c_1,c_2,...,c_{l_\cap}, c_{l_\cap+1}',..., c_{l_2}'\}$. It is not possible to list color the vertices in $X^* \cup Y^*$ from this list as it induces a clique of size $\omega'$. This a contradiction to $G^*$ being $L^*$-colorable. So the claim holds and hence both conditions cannot occur simultaneously. \\

\par As we defined $L^*(u)=L(u)$ for $u \notin X \cup Y$, we can use $c^*$ instead of $c$ for these vertices. Hence the Claim 2 holds. This makes it possible to color $G[X \cup Y]$ and hence complete the $L$-coloring of $G$. Therefore, $G$ is $\omega$-choosable.
\end{case}
\end{proof}

In conclusion, every claw-free perfect graph is decomposed into peculiar, elementary, and graph with clique cutset. Gravier and Maffray proved every peculiar graph is chromatic choosable. In this paper, we proved that every elementary graph is chromatic choosable. Now, the case of graphs with clique cutset is open for the question of chromatic choosability. That is, whether a claw-free perfect graph with clique cutset is chromatic choosable.

\printbibliography[
heading=bibintoc,
title={Reference}
]

\end{document}